\newlist{hypothenum}{enumerate}{3}
\setlist[hypothenum,1]{label=(\roman*)}
\theoremstyle{plain}
\newtheorem{theorem}{Theorem}[section]
\newtheorem{problem}[theorem]{Problem}
\newtheorem{conjecture}[theorem]{Conjecture}
\theoremstyle{definition}
\theoremstyle{remark}
\numberwithin{equation}{section}
\newcommand{\setsuch}[2]{\left\{ #1 \; \middle| \; #2 \right\}}
\newcommand{\restr}[2]{{\left. #1 \right|}_{#2}}
\newcommand{\RR}{\mathbb{R}}
\newcommand{\CC}{\mathbb{C}}
\newcommand{\lie}{\mathfrak}
\DeclareMathOperator{\Id}{Id}
\DeclareMathOperator{\SO}{SO}
\newcommand{\fundef}[5]{
\entrymodifiers={+!!<0pt,\fontdimen22\textfont2>}
\xymatrix@R=3pt{\llap{$#1$\;\;} {#2} \ar@{->}[r] & {#3} \\ {#4} \ar@{|->}[r] & {#5}}
} 
\def\noqed{\renewcommand{\qedsymbol}{}}
\newcommand{\ie}{i.e.\ }
\newcommand{\eg}{e.g.\ }
\begin{document}

\title{Action of the restricted Weyl group on the $L$-invariant vectors of a representation}
\author{Ilia Smilga}
              
\maketitle

\begin{abstract}
This note constitutes a brief survey of our recent work on the problem of determining, for a given real Lie group~$G$, the set of representations~$V$ in which the longest element~$w_0$ of the restricted Weyl group~$W$ acts nontrivially on the subspace~$V^L$ of~$V$ formed by vectors that are invariant by~$L$, the centralizer of a maximal split torus of~$G$.
\end{abstract}

\section{Introduction and motivation}

\subsection{Basic notations and statement of problem}

Let $G$ be a semisimple real Lie group, $\lie{g}$ its Lie algebra, $\lie{g}^\CC$ the complexification of~$\lie{g}$. We start by establishing the notations for some well-known objects related to~$\lie{g}$.

\begin{itemize}
\item We choose in~$\lie{g}$ a \emph{Cartan subspace}~$\lie{a}$ (an abelian subalgebra of $\lie{g}$ whose elements are diagonalizable over~$\RR$ and which is maximal for these properties).
\item We choose in~$\lie{g}^\CC$ a \emph{Cartan subalgebra}~$\lie{h}^\CC$ (an abelian subalgebra of $\lie{g}^\CC$ whose elements are diagonalizable and which is maximal for these properties) that contains~$\lie{a}$.
\item We denote $L := Z_G(\lie{a})$ the centralizer of~$\lie{a}$ in~$G$, $\lie{l}$ its Lie algebra.
\item Let $\Delta$~be the set of roots of~$\lie{g}^\CC$ in~$(\lie{h}^\CC)^*$. We shall identify $(\lie{h}^\CC)^*$ with~$\lie{h}^\CC$ via the Killing form. We call $\lie{h}_{(\RR)}$ the $\RR$-linear span of~$\Delta$; it is given by the formula $\lie{h}_{(\RR)} = \lie{a} \oplus i \lie{a}^\perp$.
\item We choose on~$\lie{h}_{(\RR)}$ a lexicographical ordering that ``puts $\lie{a}$ first'', \ie such that every vector whose orthogonal projection onto~$\lie{a}$ is positive is itself positive. We call $\Delta^+$ the set of roots in~$\Delta$ that are positive with respect to this ordering, and we let $\Pi = \{\alpha_1, \ldots, \alpha_r\}$ be the set of simple roots in~$\Delta^+$. Let~$\varpi_1, \ldots, \varpi_r$ be the corresponding fundamental weights.
\item We introduce the \emph{dominant Weyl chamber}
\[\lie{h}^+ := \setsuch{X \in \lie{h}}{\forall i = 1, \ldots, r,\quad \alpha_i(X) \geq 0},\]
and the \emph{dominant restricted Weyl chamber}
\[\lie{a}^+ := \lie{h}^+ \cap \lie{a}.\]
\item We introduce the \emph{restricted Weyl group} $W := N_G(\lie{a})/Z_G(\lie{a})$ of~$G$. Then $\lie{a}^+$ is a fundamental domain for the action of~$W$ on~$\lie{a}$. We define the \emph{longest element} of the restricted Weyl group as the unique element $w_0 \in W$ such that $w_0(\lie{a}^+) = -\lie{a}^+$.
\end{itemize}

Our goal is to study the action of~$W$, and more specifically of~$w_0$, on various representations~$V$ of~$G$. Note however that this action is ill-defined: indeed if we want to see the abstract element $w_0 \in W = N_G(\lie{a})/Z_G(\lie{a})$ as the projection of some concrete map $\tilde{w_0} \in N_G(\lie{a}) \in G$, then $\tilde{w}_0$ is defined only up to multiplication by an element of $Z_G(\lie{a}) = L$, whose action on~$V$ can of course be nontrivial.

This naturally suggests the idea of restricting to $L$-invariant vectors. Given a representation $V$ of~$\lie{g}$, we denote
\[V^L := \setsuch{v \in V}{\forall l \in L,\;\; l \cdot v = v}\]
the $L$-invariant subspace of~$V$: then $W$, and in particular $w_0$, has a well-defined action on~$V^L$.

Our goal is to solve the following problem:
\begin{problem}
\label{nontrivial_everything_group}
Given a semisimple real Lie group~$G$, characterize the representations $V$ of~$G$ for which the action of~$w_0$ on $V^L$ is nontrivial.
\end{problem}

In this note, we shall present our recent work on this problem.

\subsection{Background and motivation}

This problem arose from the author's work in geometry. The interest of this particular algebraic property is that it furnishes a sufficient, and presumably necessary, condition for another, geometric property of~$V$. Namely, the author obtained the following result:
\begin{theorem}{\cite{Smi16b}}
Let $G$~be a semisimple real Lie group, $V$ a representation of~$G$. Suppose that the action of~$w_0$ on~$V^L$ is nontrivial. Then there exists, in the affine group $G \ltimes V$, a subgroup $\Gamma$ whose linear part is Zariski-dense in~$G$, which is free of rank at least~$2$, and acts properly discontinuously on the affine space corresponding to~$V$.
\end{theorem}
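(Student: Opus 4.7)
The plan is to construct $\Gamma = \langle \gamma_1, \gamma_2\rangle \subset G \ltimes V$ as a rank-two free group by a ping-pong argument carried out in two stages: first on an appropriate flag variety of~$G$ for the linear parts $g_i$, then on the affine space $V$ itself using carefully chosen translation parts $v_i$, with $\gamma_i = (g_i, v_i)$.

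First I would construct the linear part. By an Abels--Margulis--Soifer-type argument, choose a Zariski-dense pair $g_1, g_2 \in G$ that are strongly loxodromic, \ie whose Jordan projections lie deep in the interior of~$\lie{a}^+$, and whose attracting/repelling flags are in sufficiently general position. Replacing them with high powers, one forces $\langle g_1, g_2\rangle$ to be free and Zariski-dense and to perform ping-pong on the relevant partial flag variety with well-separated attracting and repelling regions.

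Next I would bring the representation $V$ into the picture. For each strongly loxodromic $g$ there is a canonical dynamical decomposition $V = V^+_g \oplus V^0_g \oplus V^-_g$ into the expanding, neutral, and contracting subspaces under the action of~$g$, and the neutral subspace $V^0_g$ can be identified with $V^L$ by conjugating by an element of~$G$ carrying the Cartan subspace of~$g$ back to~$\lie{a}$; under this identification, the induced action of $g$ on $V^0_g$ is, up to an element of~$L$ that acts trivially on~$V^L$, precisely the action of the longest restricted Weyl element~$w_0$. This is where the hypothesis enters: since $w_0$ acts nontrivially on~$V^L$, the endomorphism $v \mapsto v - w_0 v$ has nonzero image, and one can choose the translation parts $v_i$ so that their \emph{Margulis invariants} (the $V^0_{g_i}$-components of the translations, transported to~$V^L$) all lie in a common open salient convex cone inside this image. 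This is the general-$G$ analog of Margulis's original sign condition on eigenvalues of affine transformations of~$\RR^3$.

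Finally, proper discontinuity would follow from a refined ping-pong on the affine space: the linear part contracts and expands the $V^\pm_g$ directions very strongly, so those components of any orbit leave every compact set quickly; meanwhile, in the $V^0$ direction, Margulis invariants are approximately additive along words in $\Gamma$, and the salient-cone condition forces these contributions to accumulate monotonically, so no orbit returns to a given compact~$K$ infinitely often. The main obstacle is exactly this last step: one needs a genuinely quantitative version of the ``Margulis invariants add along words'' principle that dominates both the error terms coming from the deviation of attracting flags of products from those of the generators, and the lower-order contributions in the $V^\pm_g$ directions. Establishing such an estimate in the generality of an arbitrary representation~$V$ — combining a controlled-Cartan-projection version of Abels--Margulis--Soifer with a representation-theoretic extension of Benoist's limit-cone machinery to the affine setting — is the technical heart of~\cite{Smi16b}.
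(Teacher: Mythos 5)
First, a caveat: the survey you were given does not prove this theorem; it is quoted from \cite{Smi16b}, so I am comparing your sketch against the argument of that paper. Your overall architecture --- Abels--Margulis--Soifer-type generators playing ping-pong on a flag variety, the dynamical decomposition $V = V^+_g \oplus V^0_g \oplus V^-_g$ with $V^0_g$ identified with $V^L$, Margulis invariants valued in $V^L$ placed in a salient cone, approximate additivity along reduced words, and the admission that the quantitative estimates are the real work --- is indeed the strategy of \cite{Smi16b}, which generalizes Margulis's original construction and \cite{AMS11}.

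However, the step where you invoke the hypothesis is wrong as stated, and it is not a cosmetic error. A strongly loxodromic $g$ is conjugate to $\exp(X)\,l$ with $X$ in the interior of $\lie{a}^+$ and $l \in L$; under the identification $V^0_g \cong V^L$ furnished by the conjugating element, $g$ therefore acts \emph{trivially} on its neutral space (both $\exp(\lie{a})$ and $L$ act trivially on $V^L$) --- not by $w_0$. The element $w_0$ enters elsewhere: the identification $V^0_g \cong V^L$ depends on declaring one of the two transverse flags of $g$ attracting, the two choices differ by $w_0$, and passing to $g^{-1}$ swaps them; consequently the Margulis invariant satisfies $M(\gamma^{-1}) = -w_0 \cdot M(\gamma)$. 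This formula is precisely where the hypothesis bites. A free group contains the inverses of its generators, and if $w_0$ acted trivially on $V^L$ then $M(\gamma^{-1}) = -M(\gamma)$, so the invariants of $\gamma$ and $\gamma^{-1}$ could never lie in a common open salient cone and the additivity-plus-cone scheme would collapse (this is, in essence, Margulis's opposite-sign obstruction). When $\restr{w_0}{V^L} \neq \Id$ one instead places the invariants of the generators on, or near, a ray in the $(-1)$-eigenspace of $w_0$ in $V^L$ --- which is exactly the image of $1 - w_0$ that you name, so your cone sits in the right place, but for a reason your sketch does not supply; and without the inversion formula your claim that the contributions ``accumulate monotonically'' is unjustified for reduced words containing negative powers of the generators. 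Once you state and use $M(\gamma^{-1}) = -w_0 M(\gamma)$ explicitly, the rest of your outline matches \cite{Smi16b}.
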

He, and other people, also proved the converse statement in some special cases:
\begin{theorem}
The converse holds, for irreducible $V$:
\begin{itemize}
\item \cite{Smi18} if $G$~is split, but not of type $A_n$ ($n \geq 2$), $D_{2n+1}$ or~$E_6$;
\item \cite{Smi18} if $G$~is split, has one of these types, and $V$ satisfies a very restrictive additional assumption (see \cite{Smi18} for the precise statement);
\item \cite{AMS11} if $G = \SO(p,q)$ for arbitrary $p$ and~$q$, and $V = \RR^{p+q}$ is the standard representation.
\end{itemize}
\end{theorem}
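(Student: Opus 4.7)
The plan is to prove the converse by contraposition: assuming that $w_0$ acts as the identity on~$V^L$ (in particular whenever $V^L = 0$), the goal is to show that no subgroup $\Gamma \subset G \ltimes V$ satisfying the hypotheses of the preceding theorem can act properly discontinuously on the affine space corresponding to~$V$. The main tool is a generalized Margulis invariant
\[
\mathcal{M}\colon (G \ltimes V)_{\mathrm{reg}} \longrightarrow V^L,
\]
defined on elements $(\gamma, v)$ whose linear part~$\gamma$ has Jordan projection in the interior of~$\lie{a}^+$: after conjugating $\gamma$ into a Cartan subgroup inside~$L$, the invariant $\mathcal{M}(\gamma, v)$ is the projection of~$v$ onto the weight-zero block of the spectral decomposition of~$V$. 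This quantity is conjugation-invariant, behaves additively up to controlled error under products of elements whose attracting and repelling flags are mutually transverse, and satisfies an inversion formula of the shape $\mathcal{M}(g^{-1}) = -w_0 \cdot \mathcal{M}(g)$ by a direct computation with the Jordan decomposition of~$g^{-1}$.

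The argument then combines two complementary prongs. \emph{Geometric prong:} following the template of Margulis and its higher-rank developments by Labourie--Margulis, Benoist and the authors of~\cite{AMS11,Smi18}, one shows that if~$\Gamma$ acts properly discontinuously then the closure of $\{\mathcal{M}(\gamma) : \gamma \in \Gamma_{\mathrm{reg}}\}$ must lie in a convex cone of~$V^L$ whose closure meets its opposite only at~$0$; intuitively, approximate additivity would otherwise turn cancellations of positive linear combinations of Margulis invariants into sequences of elements of~$\Gamma$ accumulating on the identity, and Zariski-density of the linear part supplies enough transverse pairs to make this quantitative. \emph{Algebraic prong:} the hypothesis that~$w_0$ restricts to the identity on~$V^L$ reduces the inversion formula to $\mathcal{M}(g^{-1}) = -\mathcal{M}(g)$, so the set of Margulis invariants of elements of~$\Gamma$ is symmetric under $v \mapsto -v$. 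Together with the existence of generic elements with $\mathcal{M} \neq 0$ (again guaranteed by Zariski-density), this symmetry is incompatible with the cone condition from the geometric prong, producing the desired contradiction.

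The main obstacle is the geometric prong in the three excluded split types $A_n$ ($n \geq 2$), $D_{2n+1}$ and~$E_6$. In every other split case one has $w_0 = -\Id$ on~$\lie{a}$, so the spectral blocks of~$V$ under a generic Cartan are permuted cleanly by inversion and both the additivity and the cancellation estimates go through. In the excluded types $-w_0$ is a nontrivial diagram involution of~$\lie{a}$, which twists the behaviour of the Margulis invariant under products by an extra nontrivial linear map and weakens the cancellation argument; one therefore restricts to those irreducible~$V$ for which an additional symmetry hypothesis (see~\cite{Smi18}) restores what is needed. The case $G = \SO(p,q)$ with $V = \RR^{p+q}$ the standard representation is handled in~\cite{AMS11} by a direct argument exploiting the invariant bilinear form on~$V$, which is indispensable when $V^L = 0$ (\eg for $G = \SO(p,p)$), since the Margulis invariant then vanishes identically and the obstruction to properness must be extracted by other means.
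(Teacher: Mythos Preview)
The paper gives no proof of this theorem; it is stated purely as a citation of results from \cite{Smi18} and \cite{AMS11}. There is thus no ``paper's own proof'' to compare against, and your proposal goes beyond what the paper attempts.

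As a summary of the ideas behind the cited references, your sketch has the right overall shape: the generalized Margulis invariant valued in~$V^L$, the inversion formula involving~$w_0$, and an opposite-sign obstruction to properness are indeed the key ingredients of that circle of results. That said, several steps are stated more cleanly than they actually are. The ``geometric prong'' (properness $\Rightarrow$ Margulis invariants confined to a salient cone) is not a general theorem one can simply invoke; establishing it is precisely the hard part of \cite{Smi18} and \cite{AMS11}, and its proof is where the type restrictions on~$G$ and the extra hypothesis on~$V$ genuinely enter, not merely as a technical nuisance in handling products. Your explanation of why the three excluded split types fail (that $-w_0$ is a nontrivial diagram automorphism on~$\lie a$) correctly identifies the distinguishing feature of those types but does not pinpoint the actual mechanism by which the argument of \cite{Smi18} breaks down. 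Finally, the claim that Zariski-density of the \emph{linear part} alone forces some $\mathcal{M}(\gamma) \neq 0$ needs justification: a purely linear $\Gamma \subset G$ has all Margulis invariants zero (though it also fixes the origin, so properness fails for a more elementary reason --- but this must be said). In short, the proposal is a reasonable heuristic roadmap rather than a proof, and the paper itself makes no claim to supply one.
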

Moreover, it seems plausible that, by combining the approaches of~\cite{Smi18} and~\cite{AMS11}, we might prove the converse in all generality. This geometric property is related to the so-called Auslander conjecture \cite{Aus64}, which is an important conjecture that has stood for more than fifty years and generated an enormous amount of work: see \eg \cite{Mil77, Mar83, FG83, AMS02, DGKPre} and many many others. For the statement of the conjecture as well as a more comprehensive survey of past work on it, we refer to~\cite{AbSur}.

\subsection{Acknowledgements}

The author is supported by the European Research Council (ERC) under the European Union Horizon 2020 research and innovation programme, grant
647133 (ICHAOS).

\section{Basic properties}

\subsection{Reduction from groups to algebras}

First of all, note that, without loss of generality, we may restrict our attention to irreducible representations: indeed, plainly, the condition $\restr{w_0}{V^L} \neq \Id$ holds for a direct sum $V = V_1 \oplus \cdots \oplus V_k$ if and only if it holds for at least one of the direct summands~$V_i$.

Let us start by recalling the classification of the irreducible representations of $G$, $\lie{g}$ and~$\lie{g}^\CC$. We introduce the notation $P$ (resp. $Q$) for the \emph{weight lattice} (resp. \emph{root lattice}), \ie the abelian subgroup of~$\lie{h}_{(\RR)}$ (or, technically, its dual) generated by $\varpi_1, \ldots, \varpi_r$ (resp. by~$\Delta$).
\begin{theorem}[see \eg {\cite[Theorem~5.5]{Kna96} or \cite[Theorems 9.4 and~9.5]{Hall15}}]
To every irreducible representation of~$\lie{g}^\CC$ we may associate, in a bijective way, a vector $\lambda \in P \cap \lie{h}^+$ called its \emph{highest weight}.
\end{theorem}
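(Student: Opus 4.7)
The plan is to establish the bijection in four main steps, following the classical highest weight theory.

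First, given a finite-dimensional irreducible representation $V$ of $\lie{g}^\CC$, I would decompose $V$ into weight spaces for $\lie{h}^\CC$. Using the lexicographical ordering on $\lie{h}_{(\RR)}$ to compare weights, finite-dimensionality guarantees the existence of a maximal weight $\lambda$; its weight space contains a nonzero vector $v_\lambda$ annihilated by every positive root vector (any nonzero image would lie in a strictly higher weight space, contradicting maximality). By definition, $\lambda$ is the highest weight of~$V$.

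Second, I would verify that $\lambda \in P \cap \lie{h}^+$. For each simple root $\alpha_i$, the triple $(e_{\alpha_i}, f_{\alpha_i}, h_{\alpha_i})$ spans a copy of $\lie{sl}_2(\CC)$; since $V$ restricts to a finite-dimensional representation of this subalgebra, the eigenvalues of $h_{\alpha_i}$ on $V$ are integers, giving $\lambda(h_{\alpha_i}) \in \ZZ$ and hence $\lambda \in P$. As $v_\lambda$ sits at the top of its $\lie{sl}_2$-string, standard $\lie{sl}_2$-theory forces $\lambda(h_{\alpha_i}) \geq 0$, whence $\lambda \in \lie{h}^+$.

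Third, I would establish that the map $V \mapsto \lambda$ is injective on isomorphism classes. The PBW decomposition $U(\lie{g}^\CC) = U(\lie{n}^-)\,U(\lie{h}^\CC)\,U(\lie{n}^+)$ applied to $v_\lambda$ shows that $U(\lie{n}^-) \cdot v_\lambda$ is a nonzero submodule, hence equal to $V$ by irreducibility, and a weight-counting argument yields $\dim V_\lambda = 1$. Given two irreducibles $V, V'$ with the same highest weight, the submodule of $V \oplus V'$ generated by $(v_\lambda, v'_\lambda)$ projects nontrivially onto each factor, and Schur's lemma forces these projections to be isomorphisms, yielding $V \simeq V'$.

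The main obstacle is surjectivity: showing that every $\lambda \in P \cap \lie{h}^+$ actually arises. The standard route goes through the Verma module $M(\lambda) := U(\lie{g}^\CC) \otimes_{U(\lie{b})} \CC_\lambda$, where $\lie{b} = \lie{h}^\CC \oplus \lie{n}^+$ and $\CC_\lambda$ is the one-dimensional $\lie{b}$-module on which $\lie{h}^\CC$ acts by $\lambda$ and $\lie{n}^+$ by zero. This infinite-dimensional module has a unique maximal proper submodule, and the quotient $L(\lambda)$ is irreducible with highest weight $\lambda$. The delicate point is to prove that $L(\lambda)$ is finite-dimensional whenever $\lambda$ is dominant integral: one argues, using the Serre relations and the $\lie{sl}_2$-subalgebras attached to simple roots, that the vectors $f_{\alpha_i}^{\lambda(h_{\alpha_i})+1} \cdot v_\lambda$ vanish in $L(\lambda)$, so each $f_{\alpha_i}$ acts locally nilpotently; an invariance argument under the restricted Weyl group then confines the weights of $L(\lambda)$ to a finite polytope, forcing finite-dimensionality. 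An alternative is to invoke the Borel--Weil construction, realizing $L(\lambda)$ as holomorphic sections of a line bundle over the flag variety.
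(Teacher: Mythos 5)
The paper itself offers no proof of this statement: it is quoted as the classical theorem of the highest weight, with the proof delegated to \cite[Theorem~5.5]{Kna96} and \cite[Theorems 9.4 and~9.5]{Hall15}. Your sketch follows essentially the same route as those references --- weight-space decomposition and a maximal weight for existence, $\lie{sl}_2$-theory for integrality and dominance, a cyclic-submodule argument for uniqueness, and Verma modules for surjectivity --- and it is correct in outline, including the correct identification of surjectivity (finite-dimensionality of $L(\lambda)$ for dominant integral $\lambda$) as the genuinely hard step.

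Two small repairs are needed. First, in the injectivity step, Schur's lemma does not directly apply: the submodule $N \subset V \oplus V'$ generated by $(v_\lambda, v'_\lambda)$ is a highest weight module but is not yet known to be irreducible, so the surjections $N \to V$ and $N \to V'$ need not be isomorphisms a priori. The standard fix is to note that $N_\lambda$ is one-dimensional and generates $N$, so every proper submodule of $N$ misses $N_\lambda$ and $N$ has a unique maximal proper submodule; both kernels, being maximal (their quotients are irreducible), must equal it, giving $V \simeq V'$. (Alternatively, invoke Weyl's complete reducibility to see that $N$ is in fact irreducible.) Second, the invariance group confining the weights of $L(\lambda)$ to a finite polytope is the full Weyl group of the complex algebra $\lie{g}^\CC$ acting on $\lie{h}^\CC$, not the restricted Weyl group $W = N_G(\lie{a})/Z_G(\lie{a})$ of the real form; the theorem as stated concerns representations of $\lie{g}^\CC$, and the restricted Weyl group only enters the paper later, when passing to $L$-invariants.
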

\begin{theorem}[see \eg {\cite[Proposition~7.15]{Kna96}}]
The restriction map $\rho \mapsto \restr{\rho}{\lie{g}}$ induces a bijection between irreducible representations of~$\lie{g}^\CC$ and those of~$\lie{g}$.
\end{theorem}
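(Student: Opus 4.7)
The plan is to construct an explicit inverse to the restriction map by $\CC$-linear extension, and then verify that this inverse is a Lie algebra homomorphism and that the correspondence preserves irreducibility.

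First I would check that, given a complex representation $\rho : \lie{g} \to \mathfrak{gl}_\CC(V)$, the formula $\tilde{\rho}(X + iY) := \rho(X) + i\rho(Y)$ (for $X, Y \in \lie{g}$) defines a $\CC$-linear map $\lie{g}^\CC \to \mathfrak{gl}_\CC(V)$; this is well-defined because $\lie{g}^\CC = \lie{g} \oplus i\lie{g}$ as a real vector space. The routine but essential verification is that $\tilde{\rho}$ preserves brackets, which follows from the $\RR$-linearity and bracket-preservation of $\rho$ on $\lie{g}$ together with the fact that the bracket on $\lie{g}^\CC$ is, by definition, the unique $\CC$-bilinear extension of the bracket on $\lie{g}$. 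By construction $\tilde{\rho}|_\lie{g} = \rho$, and any $\CC$-linear Lie algebra homomorphism extending $\rho$ must agree with $\tilde\rho$ on $i\lie{g}$, so restriction and $\CC$-linear extension are mutually inverse.

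Next I would show that this bijection preserves irreducibility. The key observation is that a complex subspace $W \subseteq V$ is stable under $\tilde{\rho}(\lie{g}^\CC)$ if and only if it is stable under $\rho(\lie{g})$: indeed, if $w \in W$ satisfies $\rho(X)w \in W$ and $\rho(Y)w \in W$, then the identity $\tilde{\rho}(X + iY)w = \rho(X)w + i\rho(Y)w$ keeps us inside $W$, since $W$ is \emph{complex}. The converse inclusion is trivial, so $\rho$ and $\tilde{\rho}$ share the same lattice of invariant subspaces and irreducibility transfers.

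There is no real obstacle: the whole argument is formal. The single conceptual point worth flagging is that ``representation of $\lie{g}$'' here tacitly means a $\CC$-linear action on a complex vector space rather than an $\RR$-linear action on a real one — this is exactly what makes the $\CC$-linear extension available, and what makes the irreducibility comparison go through (since $i \cdot w$ lies in $W$ whenever $w$ does).
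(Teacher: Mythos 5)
Your argument is correct and is exactly the standard proof of this fact; the paper itself gives no proof, simply citing Knapp's Proposition~7.15, which establishes the statement by the same universal-property argument you describe ($\CC$-linear extension as inverse to restriction, plus the observation that $\rho$ and $\tilde\rho$ have the same complex invariant subspaces). You also correctly identify the one point that actually matters here, namely that ``representation of~$\lie{g}$'' means a $\CC$-linear action on a complex vector space --- without that convention the statement is false, so flagging it is exactly right.
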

\begin{theorem}
Every irreducible representation of~$G$ yields, by derivation, an irreducible~\cite[Proposition~7.15]{Kna96} representation of~$\lie{g}$. The converse is true if $G = \tilde{G}$ is simply-connected. For arbitrary $G$, the representation $\rho_\lambda(\lie{g})$ lifts to~$G$ if and only if \cite[\S~5.8]{Kna96} $\lambda$ lies in some lattice $\Lambda_G$ satisfying $Q \subset \Lambda_G \subset P$.
\end{theorem}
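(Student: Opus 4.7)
The plan is to decompose the statement into its three parts and handle each with standard Lie-theoretic machinery. The first assertion, that differentiation sends irreducible $G$-representations to irreducible $\mathfrak{g}$-representations, is immediate: a $\mathfrak{g}$-invariant subspace is stable under $\exp(\mathfrak{g})$, which generates the identity component~$G^0$; since an irreducible representation of~$G$ is irreducible already under~$G^0$ (any $G^0$-invariant subspace has a $G$-orbit consisting of finitely many $G^0$-invariant subspaces whose sum is $G$-invariant), irreducibility is preserved. This is precisely the content of~\cite[Proposition~7.15]{Kna96}.

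For the second part, I would invoke the classical Lie correspondence for simply-connected groups: since $\tilde{G}$ is simply connected, any Lie algebra homomorphism $\mathfrak{g} \to \mathfrak{gl}(V)$ integrates uniquely to a Lie group homomorphism $\tilde{G} \to \GL(V)$. Composing the bijection of the previous theorem (between irreducible representations of $\mathfrak{g}^\CC$ and of $\mathfrak{g}$) with this integration yields the parametrization of irreducible representations of~$\tilde{G}$ by highest weights $\lambda \in P \cap \lie{h}^+$.

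For the third part, I would write the arbitrary semisimple $G$ as a quotient $\tilde{G}/Z$, where $Z$ is a subgroup of the (discrete, finite) center $Z(\tilde{G})$. A representation $\rho_\lambda$ of~$\tilde{G}$ descends to $G$ if and only if it is trivial on~$Z$. The key computation is then to describe how $Z(\tilde{G})$ acts on the irreducible representation with highest weight~$\lambda$: via Schur's lemma it acts by a scalar character $\chi_\lambda$, and a short calculation (using that $Z(\tilde{G})$ is contained in $\exp(\lie{h})$ modulo the coroot lattice, whose Pontryagin dual is precisely $P/Q$) shows that $\chi_\lambda$ depends only on the class $\lambda + Q \in P/Q$, and the pairing $P/Q \times Z(\tilde{G}) \to \CC^*$ is non-degenerate. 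Defining $\Lambda_G$ as the annihilator of~$Z$ under this pairing, one obtains $Q \subset \Lambda_G \subset P$ and the equivalence $\rho_\lambda$ factors through $G$ iff $\lambda \in \Lambda_G$.

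The main obstacle is the identification of $Z(\tilde{G})$ with the Pontryagin dual of~$P/Q$ and the verification that the central character of $\rho_\lambda$ is given by the pairing with $\lambda \bmod Q$; this is somewhat technical but entirely standard (and the author's reference to~\cite[\S~5.8]{Kna96} in fact covers this step), so I would cite it rather than reprove it. Everything else is routine once the three parts are separated cleanly.
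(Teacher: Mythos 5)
The paper does not actually prove this statement: it is presented as a standard classification result and the two nontrivial assertions are delegated wholesale to Knapp (\cite[Proposition~7.15]{Kna96} and \cite[\S~5.8]{Kna96}). So the relevant question is whether your sketch is itself sound. Its overall architecture (derivation preserves irreducibility; integrate over $\tilde G$; test triviality on $\ker(\tilde G \to G)$ via the central character) is the right one, but two of your intermediate claims are false as stated.

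First, the parenthetical argument that an irreducible representation of $G$ is automatically irreducible under $G^0$ is wrong: the $G$-orbit of a $G^0$-invariant subspace spans a $G$-invariant subspace, hence all of $V$, but that does not force each individual subspace to be all of $V$ (think of a two-dimensional representation of $\Orth(2)$ induced from a nontrivial character of $\SO(2)$). The first assertion of the theorem only holds because $G$ is implicitly connected, in which case $\exp(\lie g)$ generates $G$ and your main argument suffices; the detour through components should simply be deleted. Second, and more seriously for the third part: for a \emph{real} semisimple $\lie g$ the simply-connected group $\tilde G$ need not have finite center, and the pairing $P/Q \times Z(\tilde G) \to \CC^*$ need not be non-degenerate. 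For $\tilde G$ the universal cover of $\SL(2,\RR)$ one has $Z(\tilde G) \cong \ZZ$ while $P/Q \cong \ZZ/2\ZZ$, and every finite-dimensional representation kills an infinite subgroup of the center; the identification of $Z(\tilde G)$ with the Pontryagin dual of $P/Q$ is valid for compact or complex simply-connected groups, not real ones. What survives, and is all you need, is the weaker statement that the central character $\chi_\lambda$ of $V_\lambda$ depends only on $\lambda + Q$ (because $Z(\tilde G)$ acts trivially in every representation whose weights lie in $Q$, e.g.\ the adjoint one), so that $\Lambda_G := \setsuch{\lambda \in P}{\chi_\lambda \equiv 1 \text{ on } \ker(\tilde G \to G)}$ is a well-defined lattice with $Q \subset \Lambda_G \subset P$. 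You should reformulate the last step this way, or route it through the complexification or the maximal compact subgroup as Knapp does, rather than through a duality that fails in the real case.
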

For every dominant weight $\lambda \in P \cap \lie{h}^+$, we denote by $V_\lambda$ the representation of~$\lie{g}^\CC$, of~$\lie{g}$ or (if it exists) of~$G$ with highest weight~$\lambda$.

To reformulate Problem~\ref{nontrivial_everything_group} in terms of algebras, it remains to note two things. First, we note that the action of $w_0$ on~$V^\lie{l}$ depends only on~$\lie{g}$, not on~$G$: indeed $G$ is in general the quotient of the (unique) simply-connected group $\tilde{G}$ with algebra~$\lie{g}$ by some subgroup of its center; but the center of~$\tilde{G}$ is in particular contained in the centralizer~$L$ of~$\lie{a}$, so is irrelevant when acting on~$V^L$. Second, it is easy to see that the space~$V^L$ always coincides with the space
\[V^\lie{l} := \setsuch{v \in V}{\forall X \in \lie{l},\;\; X \cdot v = 0}\]
of $\lie{l}$-invariant vectors of~$V$. So Problem~\ref{nontrivial_everything_group} is in fact equivalent to the following:

\begin{problem}
\label{nontrivial_everything}
Given a semisimple real Lie algebra~$\lie{g}$, characterize the set of weights $\lambda \in P \cap \lie{h}^+$ for which the action of~$w_0$ on the $\lie{l}$-invariant subspace~$V_\lambda^\lie{l}$ of the representation~$V_\lambda$ of~$\lie{g}$ with highest weight~$\lambda$ is nontrivial.
\end{problem}

\subsection{Additivity properties}

A first step towards the solution of Problem~\ref{nontrivial_everything} is given by the following characterization:
\begin{theorem}~
\label{submonoid_and_ideal}
\begin{hypothenum}
\item \label{itm:radical} The set of weights~$\lambda \in P \cap \lie{h}^+$ such that $V_\lambda^\lie{l} \neq 0$ is contained in $Q \cap \lie{h}^+$.
\item \label{itm:submonoid} The set of weights~$\lambda$ such that $V_\lambda^\lie{l} \neq 0$ is in fact a submonoid of the additive monoid $Q \cap \lie{h}^+$, \ie
\[\forall \lambda, \mu \in Q \cap \lie{h}^+,\quad
\begin{cases}
  V_\lambda^\lie{l} \neq 0 \\
  V_\mu^\lie{l} \neq 0
\end{cases}
  \implies
V_{\lambda+\mu}^\lie{l} \neq 0.\]
\item \label{itm:ideal} In this monoid, the subset of weights~$\lambda$ such that $\restr{w_0}{V_\lambda^\lie{l}} \neq \pm \Id$ is an ideal, \ie
\[\forall \lambda, \mu \in Q \cap \lie{h}^+,\quad
\begin{cases}
  \restr{w_0}{V_\lambda^\lie{l}} \neq \pm \Id \\
  V_\mu^\lie{l} \neq 0
\end{cases}
\implies
\restr{w_0}{V_{\lambda+\mu}^\lie{l}} \neq \pm \Id.\]
\end{hypothenum}
\end{theorem}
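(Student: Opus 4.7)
Part (i) is essentially tautological once one observes that $\lie{l}^\CC$ contains the full complex Cartan subalgebra $\lie{h}^\CC$, since $\lie{h}^\CC$ centralizes $\lie{a} \subset \lie{h}^\CC$. Extending invariance $\CC$-linearly, $V_\lambda^\lie{l} = V_\lambda^{\lie{l}^\CC} \subset V_\lambda[0]$, the zero weight space for $\lie{h}^\CC$. Since every weight of $V_\lambda$ equals $\lambda$ minus a non-negative integer combination of simple roots, $V_\lambda[0] \neq 0$ forces $\lambda \in Q$.

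Parts (ii) and (iii) both rest on the \emph{Cartan projection}
\[\pi_{\lambda,\mu}\colon V_\lambda \otimes V_\mu \twoheadrightarrow V_{\lambda+\mu},\]
the unique (up to scalar) $\lie{g}^\CC$-equivariant surjection, characterized by $v_\lambda \otimes v_\mu \mapsto v_{\lambda+\mu}$. Being $\lie{g}$-equivariant, it is in particular $L$-equivariant, and therefore restricts to a map $\overline{\pi}_{\lambda,\mu}\colon V_\lambda^\lie{l} \otimes V_\mu^\lie{l} \to V_{\lambda+\mu}^\lie{l}$. Fixing any lift $\tilde{w}_0 \in N_G(\lie{a})$ of the restricted Weyl element $w_0$, the equivariance $\pi_{\lambda,\mu} \circ (\tilde{w}_0 \otimes \tilde{w}_0) = \tilde{w}_0 \circ \pi_{\lambda,\mu}$ shows that $\overline{\pi}_{\lambda,\mu}$ intertwines the tensor-product $w_0$-action on the source with the $w_0$-action on the target.

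Granting the \emph{key non-vanishing claim} that $\overline{\pi}_{\lambda,\mu}$ does not vanish identically, part (ii) is immediate. For (iii), note that $w_0^2 = 1$ in $W$, so any lift $\tilde{w}_0$ acts on each $V_\bullet^\lie{l}$ as an involution ($\tilde{w}_0^2 \in L$ acts trivially on $L$-invariants); hence we have eigenspace decompositions $V_\bullet^\lie{l} = V_\bullet^{\lie{l},+} \oplus V_\bullet^{\lie{l},-}$. The hypothesis $\restr{w_0}{V_\lambda^\lie{l}} \neq \pm \Id$ is equivalent to both $V_\lambda^{\lie{l},+}$ and $V_\lambda^{\lie{l},-}$ being nonzero, while $V_\mu^\lie{l} \neq 0$ yields a nonzero $V_\mu^{\lie{l},\epsilon}$ for at least one sign $\epsilon$. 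Since $w_0$-eigenvalues multiply on tensor factors, $\overline{\pi}_{\lambda,\mu}(V_\lambda^{\lie{l},\delta} \otimes V_\mu^{\lie{l},\epsilon}) \subset V_{\lambda+\mu}^{\lie{l},\delta\epsilon}$, and invoking the non-vanishing claim for each sign $\delta \in \{+,-\}$ separately produces nonzero vectors in both $V_{\lambda+\mu}^{\lie{l},+}$ and $V_{\lambda+\mu}^{\lie{l},-}$, as required.

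The main obstacle is the non-vanishing claim, strengthened for (iii) to hold on each sign-coherent summand $V_\lambda^{\lie{l},\delta} \otimes V_\mu^{\lie{l},\epsilon}$. A priori, nothing prevents $V_\lambda^\lie{l} \otimes V_\mu^\lie{l}$ from lying in $\ker \pi_{\lambda,\mu}$, which is the sum of the lower-highest-weight isotypic components in the decomposition of $V_\lambda \otimes V_\mu$. I would attempt an explicit construction: write nonzero invariants as $v = F \cdot v_\lambda$ and $w = G \cdot v_\mu$ for weight-$(-\lambda)$ and weight-$(-\mu)$ elements $F, G \in U(\lie{n}^-)$, then use the Leibniz rule to relate $\pi_{\lambda,\mu}(v \otimes w)$ to $FG \cdot v_{\lambda+\mu}$, a vector in the zero weight space of $V_{\lambda+\mu}$ whose non-vanishing can be analyzed directly. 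Alternatively, a duality argument using the natural inclusion $V_{\lambda+\mu}^* \hookrightarrow V_\lambda^* \otimes V_\mu^*$ dual to $\pi_{\lambda,\mu}$, together with the non-degeneracy of the pairing between $V_\bullet^\lie{l}$ and its dual, may yield the result more cleanly.
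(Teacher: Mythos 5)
Your architecture is exactly that of the paper's proof: part (i) via $\lie{h}\subset\lie{l}$ and the zero-weight space, and parts (ii)--(iii) via the equivariant projection $V_\lambda\otimes V_\mu\twoheadrightarrow V_{\lambda+\mu}$ (the Cartan product) together with the eigenspace bookkeeping for the involution $\restr{w_0}{V^{\lie{l}}}$. That bookkeeping is correct. But the one genuinely nontrivial ingredient --- your ``key non-vanishing claim'' --- is precisely what you leave unproven, and neither of the strategies you sketch obviously closes the gap. The Leibniz-rule idea does not work as stated: writing $v=F\cdot v_\lambda$ and $w=G\cdot v_\mu$ with $F,G\in U(\lie{n}^-)$, the coproduct gives $FG\cdot(v_\lambda\otimes v_\mu)=\sum (FG)_{(1)}v_\lambda\otimes (FG)_{(2)}v_\mu$, of which $Fv_\lambda\otimes Gv_\mu$ is only one of many terms, so there is no direct relation between $\pi_{\lambda,\mu}(v\otimes w)$ and $FG\cdot v_{\lambda+\mu}$; and even granting such a relation, the non-vanishing of $FG\cdot v_{\lambda+\mu}$ would itself require proof. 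The duality remark is only a reformulation of the same question. So as written the proof is incomplete: a priori $V_\lambda^{\lie{l}}\otimes V_\mu^{\lie{l}}$ could lie in $\ker\pi_{\lambda,\mu}$, as you yourself observe.

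The paper closes this gap with a geometric input: realize $\bigoplus_\lambda V_\lambda$ as the coordinate ring $\CC[G/N]$ of the base affine space, where $N$ is a maximal unipotent subgroup, graded by highest weight so that the Cartan product becomes pointwise multiplication of regular functions. Since $G/N$ is an irreducible variety, $\CC[G/N]$ has no zero divisors, hence $u\odot v\neq 0$ for \emph{any} nonzero $u$ and $v$ --- the strongest form of your claim, valid in particular on each sign-coherent summand $V_\lambda^{\lie{l},\delta}\otimes V_\mu^{\lie{l},\epsilon}$ needed for (iii). Some input of this kind (equivalently, multiplying nonzero sections of line bundles on the irreducible variety $G/B$) is unavoidable; it is the actual content of the additivity statements, and your proposal identifies where it is needed but does not supply it.
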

\begin{proof}
Point (i) is straightforward: indeed, since $\lie{h}$ is an abelian algebra containing~$\lie{a}$, we have $\lie{h} \subset \lie{l}$, hence $V^\lie{l}_\lambda$ is contained in~$V^\lie{h}_\lambda$, which is just the zero-weight space of~$V_\lambda$. By well-known theory (see \eg \cite{Hall15}, Theorem~10.1), the latter is nontrivial, or in other terms $0$~is a weight of~$V_\lambda$, if and only if $\lambda$ lies in the root lattice~$Q$.

Points (ii) and~(iii) rely on the following classical theorem. \noqed
\end{proof}

Let $G$ be a simply-connected complex Lie group with Lie algebra $\lie{g}^\CC$ and $N$ a maximal unipotent subgroup of~$G$.
Define $\CC[G/N]$ the space of regular (\ie polynomial) functions on $G/N$.
Pointwise multiplication of functions is $G$-equivariant and makes $\CC[G/N]$ into a $\CC$-algebra without zero divisors (because $G/N$ is irreducible as an algebraic variety).
\begin{theorem}[see \eg {\cite[(3.20)--(3.21)]{VinPopBook}}]\label{thm:Vinberg}
  Each finite-dimensional representation of~$G$ (or equivalently of its Lie algebra~$\lie{g}^\CC$) occurs exactly once as a direct summand of the representation $\CC[G/N]$.
  The $\CC$-algebra $\CC[G/N]$ is graded by the highest weight $\lambda$, in the sense that the product of a vector in $V_\lambda$ by a vector in $V_\mu$ lies in $V_{\lambda+\mu}$ (where $V_\lambda$ stands here for the subrepresentation of $\CC[G/N]$ with highest weight $\lambda$).
\end{theorem}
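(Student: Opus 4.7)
The plan is to derive both assertions from the algebraic Peter--Weyl theorem, exploiting the observation that the maximal torus $H\subset G$ (chosen so that $B=HN$ is a Borel subgroup) acts on $G/N$ from the right and commutes with the left $G$-action.

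Peter--Weyl furnishes a $(G\times G)$-equivariant isomorphism
\[
\CC[G] \;\cong\; \bigoplus_{\lambda\in P\cap\lie{h}^+} V_\lambda \otimes V_\lambda^{*},
\]
in which the two factors carry respectively the left and right regular actions; concretely, $(v,\xi)\in V_\lambda\otimes V_\lambda^{*}$ is sent to the matrix coefficient $g\mapsto\xi(g^{-1}v)$. Taking right $N$-invariants singles out $\xi\in(V_\lambda^*)^N$, and since $N$ is the unipotent radical of a Borel, classical highest-weight theory identifies $(V_\lambda^*)^N$ with the one-dimensional line spanned by the highest-weight vector $\xi^*_\lambda\in V_\lambda^{*}$, of some weight $\lambda^\vee\in P$. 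This yields
\[
\CC[G/N] \;\cong\; \bigoplus_\lambda V_\lambda,
\]
each irreducible occurring exactly once, which is the first assertion.

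Since $H$ normalizes $N$, right multiplication by $h\in H$ descends to a self-map of $G/N$, so $H$ acts on $\CC[G/N]$ from the right by $\CC$-algebra automorphisms, commuting with the left $G$-action. By Schur's lemma, $H$ acts on each summand $V_\lambda\subset\CC[G/N]$ through a single character, and a direct computation from the matrix-coefficient formula identifies it: for $f_v(g)=\xi^*_\lambda(g^{-1}v)$,
\[
(f_v\cdot h)(g) \;=\; \xi^*_\lambda(h^{-1}g^{-1}v) \;=\; (h\cdot\xi^*_\lambda)(g^{-1}v) \;=\; \chi_{\lambda^\vee}(h)\,f_v(g),
\]
so the summand $V_\lambda$ is precisely the right-$H$-weight space of weight $\lambda^\vee$. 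Because $\lambda\mapsto\lambda^\vee$ is linear (hence injective and additive) on $P\cap\lie{h}^+$, distinct summands occupy distinct $H$-weight spaces. Multiplication being $H$-equivariant, weights add: for $f\in V_\lambda$ and $f'\in V_\mu$, the product $ff'$ has right $H$-weight $\lambda^\vee+\mu^\vee=(\lambda+\mu)^\vee$, which forces $ff'\in V_{\lambda+\mu}$ and establishes the grading.

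The main non-trivial input is the identification of $(V_\lambda^*)^N$ as one-dimensional, which is the classical highest-weight theorem for complex semisimple Lie algebras. Everything else reduces to linear algebra driven by the commuting right $H$-action; in particular, the integral-domain hypothesis on $\CC[G/N]$ is not needed for this approach, though it would enter naturally in an alternative proof based on taking products $f_\lambda f_\mu$ of highest-weight vectors.
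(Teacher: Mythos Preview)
The paper does not actually prove this theorem: it is stated as a classical result with a reference to \cite[(3.20)--(3.21)]{VinPopBook}, and is then used as a black box in the proof of Theorem~\ref{submonoid_and_ideal}. So there is no ``paper's own proof'' to compare against.

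That said, your argument is correct and is essentially the standard derivation of this fact. The decomposition $\CC[G]\cong\bigoplus_\lambda V_\lambda\otimes V_\lambda^*$ together with $\dim(V_\lambda^*)^N=1$ gives the multiplicity-one statement, and your use of the commuting right $H$-action to identify each $V_\lambda$ as a single $H$-eigenspace (with eigencharacter $\lambda^\vee=-w_0\lambda$, which you might state explicitly) cleanly yields the grading. One small remark: you assert that $\lambda\mapsto\lambda^\vee$ is linear without saying what it is; making the identification $\lambda^\vee=-w_0\lambda$ explicit would remove any ambiguity and make the additivity $\lambda^\vee+\mu^\vee=(\lambda+\mu)^\vee$ transparent.
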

For given $\lambda$ and $\mu$, we call \emph{Cartan product} the induced bilinear map $\odot: V_\lambda \times V_\mu\to V_{\lambda+\mu}$.
Given $u\in V_\lambda$ and $v\in V_\mu$, this defines $u\odot v\in V_{\lambda+\mu}$ as the projection of $u\otimes v\in V_\lambda\otimes V_\mu=V_{\lambda+\mu}\oplus\dots$.
Since $\CC[G/N]$ has no zero divisor, $u\odot v\neq 0$ whenever $u\neq 0$ and $v\neq 0$.

We may now finish the proof:

\begin{proof}[Proof of Theorem~\ref{submonoid_and_ideal}, continued]~
\begin{hypothenum}[start=2]
\item Let $\lambda$ and~$\mu$ be two weights with this property. Choose any two nonzero vectors $u_1$ and~$u_2$ in $V_{\lambda_1}^{\lie{l}}$ and~$V_{\lambda_2}^{\lie{l}}$ respectively. Then the vector $u_1 \odot u_2$ is in $V_{\lambda_1 + \lambda_2}$, is invariant by~$\lie{l}$, and is still nonzero.
\item Let $\lambda$ be such that $\restr{w_0}{V_\lambda^\lie{l}} \neq \pm \Id$, and $\mu$ be such that $V_\mu^\lie{l} \neq 0$. We can then choose nonzero vectors $u_+$ and $u_-$ in~$V_\lambda^\lie{l}$ such that $w_0\cdot u_+=u_+$ and $w_0\cdot u_-=-u_-$, and a nonzero vector $v \in V_\mu^\lie{l}$ such that $w_0\cdot v=\pm v$ for some sign (indeed since $w_0^2 = \Id$ in the Weyl group and the action of the Weyl group on~$V^\lie{l}$ is well-defined, $\restr{w_0}{V_\mu^\lie{l}}$ is a linear involution).
  Then $u_+\odot v$ and $u_-\odot v$ are nonzero elements of~$V_{\lambda+\mu}^\lie{l}$ on which $w_0$ acts by opposite signs. \qedhere
\end{hypothenum}
\end{proof}

\subsection{Reduction from semisimple to simple algebras}

Theorem~\ref{submonoid_and_ideal} suggests the decomposition of Problem~\ref{nontrivial_everything} into two subproblems, as follows:
\begin{problem}
\label{nontrivial_Vl}
Given a semisimple Lie algebra~$\lie{g}$ and a weight $\lambda \in P \cap \lie{h}^+$, give a simple necessary and sufficient condition for having $V_\lambda^\lie{l} \neq 0$.
\end{problem}
\begin{problem}
\label{nontrivial_w0_action}
Given a semisimple Lie algebra~$\lie{g}$ and a weight $\lambda \in P \cap \lie{h}^+$, assuming that $V_\lambda^\lie{l} \neq 0$, give a simple necessary and sufficient condition for having $\restr{w_0}{V_\lambda^\lie{l}} = \Id$.
\end{problem}

Let us now reduce these two problems to the case where $\lie{g}$ is simple. This can be done by using the following theorem, whose proof is straightforward:
\begin{theorem}
Let $\lie{g} = \lie{g}_1 \oplus \cdots \oplus \lie{g}_k$ is a semisimple Lie algebra. Let $\lambda = \lambda_1 + \cdots + \lambda_k$ be a weight of~$\lie{g}$, with every component~$\lambda_i$
lying in the Cartan subalgebra of~$\lie{g}_i$. Then we have, with the obvious notations:
\[\begin{cases}
\lie{l} = \lie{l}_1 \oplus \cdots \oplus \lie{l}_k ;\\
V_\lambda^\lie{l} = V_{1, \lambda_1}^{\lie{l}_1} \otimes \cdots \otimes V_{k, \lambda_k}^{\lie{l}_k} ; \\
W = W_1 \times \cdots \times W_k ; \\
w_0 = w_{0, 1} \cdots w_{0, k} ; \\
\restr{w_0}{V_\lambda^\lie{l}} = \restr{w_{0, 1}}{V_{1, \lambda_1}^{\lie{l}_1}} \otimes \cdots \otimes \restr{w_{0, k}}{V_{k, \lambda_k}^{\lie{l}_k}} .
\end{cases}\]
\end{theorem}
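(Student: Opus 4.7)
The overall plan is to observe that each of the five identities is a consequence of a single principle: the direct sum decomposition $\lie{g} = \lie{g}_1 \oplus \cdots \oplus \lie{g}_k$ into mutually commuting simple ideals is respected by every construction appearing in the statement, provided that the auxiliary choices (Cartan subspace, Cartan subalgebra, ordering on $\lie{h}_{(\RR)}$) are made compatibly. I would therefore begin by fixing, for each $i$, a Cartan subspace $\lie{a}_i \subset \lie{g}_i$, a Cartan subalgebra $\lie{h}_i^\CC \subset \lie{g}_i^\CC$ containing $\lie{a}_i$, and a lexicographic ordering on $(\lie{h}_i)_{(\RR)}$ that puts $\lie{a}_i$ first; and then check that $\lie{a} := \bigoplus \lie{a}_i$ is a Cartan subspace of $\lie{g}$, $\lie{h}^\CC := \bigoplus \lie{h}_i^\CC$ is a Cartan subalgebra of $\lie{g}^\CC$, and the sum ordering has the required property. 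Abelianness, $\RR$-diagonalizability and maximality all transfer from the factors to the sum because $[\lie{g}_i, \lie{g}_j] = 0$ for $i \neq j$. The root system then splits as $\Delta = \bigsqcup \Delta_i$, with $\Delta^+ = \bigsqcup \Delta_i^+$ and $\Pi = \bigsqcup \Pi_i$, so the dominant chamber decomposes as $\lie{a}^+ = \bigoplus \lie{a}_i^+$.

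The identity $\lie{l} = \bigoplus \lie{l}_i$ is then immediate, since for $j \neq i$ we have $[\lie{g}_j, \lie{a}_i] = 0$, so $Z_{\lie{g}}(\lie{a}) = \bigoplus_i Z_{\lie{g}_i}(\lie{a}_i)$. For the representation-theoretic content, I would invoke the standard fact that the irreducible $V_\lambda$ of $\lie{g}$ with highest weight $\lambda = \lambda_1 + \cdots + \lambda_k$ is canonically the external tensor product $V_{1,\lambda_1} \otimes \cdots \otimes V_{k,\lambda_k}$, each $\lie{g}_i$ acting on its own tensor factor and trivially on the others; then the $\lie{l}$-invariants split as a tensor product because the actions of the $\lie{l}_i$ commute and each one affects only one factor.

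For the Weyl group, the ideals $\lie{g}_i$ are intrinsically characterized as the minimal nonzero ideals of~$\lie{g}$, so any element of $N_G(\lie{a})$ permutes them; and the induced permutation fixes each~$i$ because the splitting $\lie{a} = \bigoplus \lie{a}_i$ is the eigenspace decomposition of $\lie{a}$ under the (commuting) adjoint actions of the centers of the groups $G_i$ corresponding to the factors. Hence every coset of $L$ in $N_G(\lie{a})$ has a representative that stabilizes each $\lie{g}_i$, yielding $W = \prod W_i$ as a direct product. For the longest element, the compatible decomposition $\lie{a}^+ = \bigoplus \lie{a}_i^+$ shows that $w_{0,1} \cdots w_{0,k}$ sends $\lie{a}^+$ to $-\lie{a}^+$, so it equals $w_0$ by the uniqueness clause in the definition of the longest element. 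The last identity is then simply the statement that in a tensor product of representations, the action of a product of elements acting each on its own factor is the tensor product of those actions, applied to $V_\lambda^\lie{l} = \bigotimes V_{i,\lambda_i}^{\lie{l}_i}$.

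The only step that requires genuine care is the $W = \prod W_i$ identity, since a priori one only gets a semidirect product with a group permuting isomorphic simple factors of $\lie{g}$; the point to nail down is that this permutation group is trivial because $N_G(\lie{a})$ actually preserves each~$\lie{g}_i$ setwise. Every other piece is routine bookkeeping once the compatible choices of Paragraph~1 are in place.
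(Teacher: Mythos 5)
The paper offers no proof of this statement at all --- it is dismissed as ``straightforward'' --- so there is no argument of the author's to compare against; your write-up is a reasonable filling-in of the details, and the overall architecture (compatible choices of $\lie{a}_i$, $\lie{h}_i^\CC$ and orderings, then componentwise verification of each of the five identities) is sound. The external-tensor-product description of $V_\lambda$, the splitting of invariants, the identification of $w_0$ via $\lie{a}^+ = \bigoplus_i \lie{a}_i^+$ and uniqueness, and the final tensor identity are all handled correctly.

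However, the one step you yourself single out as requiring genuine care is precisely the one whose justification does not work. You argue that the permutation of isomorphic simple factors induced by an element of $N_G(\lie{a})$ is trivial because $\lie{a} = \bigoplus_i \lie{a}_i$ is ``the eigenspace decomposition of $\lie{a}$ under the adjoint actions of the centers of the $G_i$''; but the adjoint action of a center is trivial, so it has a single eigenspace and detects nothing --- this argument collapses. The correct (and simpler) reason is connectedness of~$G$: every $\Ad(g)$ is then an inner automorphism of~$\lie{g}$, and since $\operatorname{ad}(X)$ maps each ideal $\lie{g}_i$ into itself, so does $\exp(\operatorname{ad}X)$ and hence so does $\Ad(g)$ for every $g \in G$; in particular $N_G(\lie{a})$ preserves $\lie{a}_i = \lie{a} \cap \lie{g}_i$. (Equivalently, one can bypass the issue entirely by recalling that the restricted Weyl group is generated by reflections in the restricted roots, each of which is supported on a single summand $\lie{a}_i$ and acts trivially on the others, whence $W = W_1 \times \cdots \times W_k$ directly.) With that substitution your proof is complete.
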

So Problem~\ref{nontrivial_Vl} completely reduces to the simple case, as $V_\lambda^\lie{l}$ is nontrivial if and only if each of its factors $V_{i, \lambda_i}^{\lie{l}_i}$ is nontrivial. For Problem~\ref{nontrivial_w0_action}, things are slightly more subtle: if the tensor product of $k$ linear maps is the identity map, this does not imply that all factors are identity maps - only that all factors are scalars, with the coefficients having product~$1$. Since $w_0^2 = \Id$ in the Weyl group, these coefficients can only be $\pm 1$. So Problem~\ref{nontrivial_w0_action} reduces to the following, slightly more general problem in the simple case:
\begin{problem}
\label{nontrivial_w0_action_simple}
Given a simple Lie algebra~$\lie{g}$ and a weight $\lambda \in P \cap \lie{h}^+$, assuming that $V_\lambda^\lie{l} \neq 0$, give a simple necessary and sufficient condition for having $\restr{w_0}{V_\lambda^\lie{l}} = \pm \Id$, as well as a criterion to determine the actual sign.
\end{problem}

\section{Known results}

\subsection{Case where $\lie{g}$ is split}

We start by focusing on the particular case where $\lie{g}$ is split. In this case, we have $\lie{a} = \lie{h}$ hence $\lie{l} = \lie{h} = \lie{a}$ (by maximality of~$\lie{h}$), so that $V_\lambda^\lie{l}$ is simply the zero-weight space of~$V_\lambda$, that we shall denote by $V_\lambda^0$. As for the restricted Weyl group, it coincides in this case with the ordinary Weyl group. So we may actually forget that $\lie{g}$ is a real Lie algebra, and simply work over~$\CC$.

Problem~\ref{nontrivial_Vl} then becomes trivial: in fact, the containment given in Theorem~\ref{submonoid_and_ideal}~\ref{itm:radical} now becomes an equality, so the condition is just that $\lambda \in Q$.

As for Problem~\ref{nontrivial_w0_action_simple}, it has been solved by the author together with Le Floch; this was the work presented at the conference talk from which this proceedings paper is derived. We proved the following result:
\begin{theorem}{\cite{LFlSm}}\label{LFlSm}
Let $\lie{g}$ be a simple Lie algebra, $\lambda \in Q \cap \lie{h}^+$ a weight of~$\lie{g}$ that lies in the root lattice.
\begin{itemize}
\item Suppose that $\lambda$ is of the form $\lambda = k p_i \varpi_i$, where $\varpi_i$ is one of the fundamental weights of~$\lambda$, $p_i$ is the smallest positive integer such that $p_i \varpi_i \in Q$, and $k$ does not exceed some constant~$m_i \in \{0, 1, 2, +\infty\}$ depending on $\lie{g}$ and~$\varpi_i$.

Then $\restr{w_0}{V_\lambda^0} = \sigma_i^k \Id$, where $\sigma_i$ is some sign depending on $\lie{g}$ and~$\varpi_i$. The specific values of the constants $p_i$, $m_i$ and~$\sigma_i$ are all listed in Table~1 of~\cite{LFlSm}.
\item If $\lambda$ does not have this form, then $\restr{w_0}{V_\lambda^0} \neq \pm \Id$.
\end{itemize}
\end{theorem}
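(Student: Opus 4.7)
The plan is to combine the monoid/ideal structure of Theorem~\ref{submonoid_and_ideal} with explicit low-level computations, propagated by the Cartan product of Theorem~\ref{thm:Vinberg}. I would split the problem into a ``sufficient'' direction (verifying $w_0 = \sigma_i^k \Id$ on the listed weights) and a ``necessary'' direction (showing $w_0 \neq \pm\Id$ on all other weights).

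For the sufficient direction, the first step is to compute, for each simple~$\lie{g}$ and each fundamental weight~$\varpi_i$, the integer~$p_i$, and then the action of $w_0$ on the zero-weight space $V_{p_i\varpi_i}^0$. The base-case claim is that, whenever $m_i \geq 1$, this space is one-dimensional and $w_0$ acts on it by some sign~$\sigma_i$; this is a direct computation in an explicit model of each minimal representation. Next, for any $u \in V_{p_i\varpi_i}^0 \setminus \{0\}$, the Cartan power $u^{\odot k}$ lies in $V_{k p_i\varpi_i}^0$ and is nonzero, since $\CC[G/N]$ has no zero divisors; because the multiplication is $G$-equivariant, hence $W$-equivariant on the $\lie{l}$-invariants, one gets $w_0 \cdot u^{\odot k} = (w_0 \cdot u)^{\odot k} = \sigma_i^k\, u^{\odot k}$. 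The intermediate claim that closes this direction is that for $k \leq m_i$ these Cartan powers actually span all of $V_{k p_i\varpi_i}^0$; this reduces to a dimension comparison which, in practice, amounts to checking that the multiplicity of the zero weight stays equal to~$1$ up to $k=m_i$ and jumps strictly above~$1$ only at $k=m_i+1$.

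For the necessary direction, Theorem~\ref{submonoid_and_ideal}\ref{itm:ideal} reduces the problem to identifying, for each simple~$\lie{g}$, a finite ``minimal bad list'' of weights $\mu \in Q \cap \lie{h}^+$ on which $w_0$ acts as neither $+\Id$ nor $-\Id$; namely (a) the first forbidden multiple $(m_i + 1)\, p_i \varpi_i$ in each fundamental direction (when $m_i < \infty$), and (b) the ``mixed'' weights $p_i\varpi_i + p_j\varpi_j$ with $i \neq j$ that already lie in~$Q$. Any $\lambda$ not of the ``good'' form can then be decomposed as one of these bad weights plus another weight with nonzero zero-weight space, and the ideal property propagates the failure $w_0 \neq \pm\Id$. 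The necessary direction therefore again boils down to base-case computations, this time on the minimal bad list, where one must exhibit two zero-weight vectors on which $w_0$ acts by opposite signs.

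The hardest part will be the explicit type-by-type verification: determining $p_i$, $m_i$, $\sigma_i$ simultaneously, and checking the base cases in both directions, especially for the exceptional types $G_2, F_4, E_6, E_7, E_8$, where the zero-weight multiplicities of higher fundamental representations grow quickly and the $W$-action on $V_\lambda^0$ requires combining Kostant's multiplicity formula with direct combinatorial (or computer-assisted) analysis. For the classical types $A_n, B_n, C_n, D_n$, standard realisations inside tensor or (skew-)symmetric powers of the defining representation should make the analysis tractable but still tedious; this case-by-case work is precisely what occupies~\cite{LFlSm}.
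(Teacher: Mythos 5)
Your overall architecture --- reduce the negative direction to a finite ``bad list'' via the ideal property of Theorem~\ref{submonoid_and_ideal}~\ref{itm:ideal}, and propagate the positive direction along rays via Cartan powers --- is indeed the skeleton of the argument in~\cite{LFlSm} (and is what this survey hints at). But your positive direction rests on a false base case. You claim that for $m_i \geq 1$ the space $V_{p_i\varpi_i}^0$ is one-dimensional, and that for $k \leq m_i$ the Cartan powers $u^{\odot k}$ span $V_{kp_i\varpi_i}^0$. Neither holds: the simplest counterexample is the adjoint representation. For $\lie{g}$ of type $B_n$ one has $\varpi_2 \in Q$, so $p_2 = 1$, and $V_{\varpi_2} = \lie{g}$ has zero-weight space $\lie{h}$ of dimension $n$, on which $w_0 = -\Id$ acts as the scalar $-1$; so this is a ``good'' weight with a multidimensional zero-weight space. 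Similarly $\dim V_{2\varpi_1}^0 = 3$ for the $27$-dimensional representation of $G_2$, and the zero-weight multiplicities along the rays $k p_i \varpi_i$ generically grow with $k$. Consequently a single Cartan power never spans $V_{kp_i\varpi_i}^0$ in these cases, and the genuinely hard content of the theorem --- proving that $w_0$ acts as a \emph{scalar} on a zero-weight space of large dimension --- is exactly the part your argument assumes away. This requires other tools (identification of $V_\lambda^0$ as a representation of $W$, explicit models for the classical families, character-theoretic or computer-assisted computations for the exceptional types), and is where most of the work of~\cite{LFlSm} lies.

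There is also a gap in your negative direction: the minimal ``mixed'' bad weights are not the $p_i\varpi_i + p_j\varpi_j$ lying in $Q$. A weight $a\varpi_i + b\varpi_j$ can lie in $Q$ with neither $a$ a multiple of $p_i$ nor $b$ a multiple of $p_j$; for instance in $\lie{sl}_4$ the adjoint weight $\varpi_1 + \varpi_3$ lies in $Q$ while $p_1 = p_3 = 4$. So the correct finite list consists of the minimal elements (in the sense of the monoid $Q \cap \lie{h}^+$) among weights not of the good form, and one must separately verify the combinatorial claim that every bad $\lambda$ decomposes as such a minimal element plus an element of $Q \cap \lie{h}^+$ --- this monoid is not free, so that step is not automatic. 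Once the list is correctly identified, your strategy of exhibiting two zero-weight vectors with opposite $w_0$-eigenvalues in each base case, then invoking the ideal property, is sound.
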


\subsection{Case where $\lie{g}$ is arbitrary}

In the general case, Problem~\ref{nontrivial_Vl} has just recently been solved by the author~\cite{Smi20}. The answer is as follows:
\begin{theorem}{\cite{Smi20}}\label{cone_and_sublattice}
Let $\lie{g}$ be a real simple Lie algebra. Then the set $X$ of dominant weights $\lambda \in P \cap \lie{h}^+$ such that $V_\lambda^{\lie{l}} \neq 0$ has of one of the following forms:
\[X = Q \cap \mathcal{C}
\quad\text{or}\quad
X = \Lambda \cap \mathcal{C}
\quad\text{or}\quad
X = (Q \cap \lie{h}^+ \setminus \mathcal{C}) \cup (\Lambda \cap \mathcal{C}),\]
where $\mathcal{C}$ is a closed polyhedral convex cone (\ie a set determined by a finite number of inequalities of the form $\phi_i(\lambda) \geq 0$ where each $\phi_i$ is a linear form) contained in the dominant Weyl chamber~$\lie{h}^+$, and $\Lambda$ (when applicable) is a sublattice of~$Q$ of index~$2$.
\end{theorem}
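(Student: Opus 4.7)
The plan is to combine the additive monoid structure from Theorem~\ref{submonoid_and_ideal} with two further inputs: finite generation of a suitable invariant ring (which produces the polyhedral cone~$\mathcal{C}$) and a $\ZZ/2\ZZ$-valued obstruction coming from the real structure of $\lie{g}$ (which produces the sublattice~$\Lambda$).

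First, I would reinterpret $X$ as the support of the graded algebra $\CC[G/N]^L$ of $L$-invariants on the base affine space. By Theorem~\ref{thm:Vinberg}, $\bigoplus_\lambda V_\lambda \cong \CC[G/N]$ as graded algebras, and the $L$-invariant part is the graded subalgebra whose homogeneous component of weight $\lambda$ is exactly $V_\lambda^\lie{l}$ (using that $\lie{l}$-invariance and $\lie{l}^\CC$-invariance coincide on complex representations). Since $L$ is reductive, being the centralizer of a torus, Hilbert's finiteness theorem implies $\CC[G/N]^L$ is finitely generated; hence $X$ is a finitely generated submonoid of $Q \cap \lie{h}^+$, and the convex cone $\mathcal{C}_0 := \RR_{\geq 0}\, X$ is polyhedral and contained in $\lie{h}^+$.

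Next, I would identify a character $\chi : Q \to \ZZ/2\ZZ$ whose kernel is the claimed sublattice $\Lambda$. The core claim is that, beyond the cone constraint and the root-lattice constraint of Theorem~\ref{submonoid_and_ideal}~\ref{itm:radical}, the only remaining obstruction to membership in $X$ is a single parity arising from the action of the compact part of~$L$ on the zero $\lie{a}$-weight space of~$V_\lambda$. Bounding this obstruction by $\ZZ/2\ZZ$ rather than by a larger cyclic group is the essential algebraic content, and the natural way to prove it is by a case-by-case analysis through the Satake diagrams of the real simple Lie algebras, exploiting the fact that the relevant component groups and symmetries of the restricted root system are $2$-torsion.

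Given $\mathcal{C}_0$ and $\Lambda$, a Cartan-product saturation argument—applying Theorem~\ref{submonoid_and_ideal}~\ref{itm:submonoid} to a fixed interior generator $\lambda_0 \in X$ and iterating—would show that $X$ contains all of $\Lambda \cap \mathcal{C}_0$, yielding the first two cases for $\mathcal{C} = \mathcal{C}_0$: either $\Lambda = Q$ (case~1) or $\Lambda \subsetneq Q$ with the parity obstruction active throughout $\mathcal{C}_0$ (case~2). The third case arises when one can identify a strictly smaller subcone $\mathcal{C} \subsetneq \mathcal{C}_0$ inside which parity genuinely restricts $X$, while outside $\mathcal{C}$ (but still inside $\lie{h}^+$) the parity is automatically satisfied by all weights of $Q$, forcing $Q \cap \lie{h}^+ \setminus \mathcal{C} \subseteq X$. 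The main obstacle, in my view, is the second step: constructing $\chi$ intrinsically and establishing the index-$2$ bound uniformly, since a priori a $\ZZ/n\ZZ$-obstruction for any $n$ could occur, and ruling out $n > 2$ appears to require a genuinely structural argument rather than a purely combinatorial one.
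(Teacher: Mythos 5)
A preliminary remark: Theorem~\ref{cone_and_sublattice} is quoted from \cite{Smi20} and this note contains no proof of it, so I can only assess your proposal on its own terms rather than against an argument in the paper. The sound part of your plan is the first step: identifying $X$ with the support of the graded algebra of invariants in $\CC[G/N]$ under the connected reductive complex subgroup with Lie algebra $\lie{l}^\CC$ (which is legitimate, since $\lie{l}$-invariance of a vector in a complex representation is equivalent to $\lie{l}^\CC$-invariance), and invoking Hilbert's finiteness theorem. This does show that $X$ is a finitely generated submonoid of $Q\cap\lie{h}^+$ and hence that $\RR_{\geq 0}X$ is a rational polyhedral cone.

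The genuine gap is at the saturation step, which is where essentially all of the content of the theorem lies. Finite generation only bounds $X$ from outside; the theorem asserts that $X$ \emph{equals} $\Lambda\cap\mathcal{C}$ (or the third-case set), i.e.\ that every lattice point of the relevant lattice inside the cone actually occurs. A finitely generated submonoid of a lattice can omit infinitely many lattice points of its cone while still generating the full lattice as a group: the numerical monoid generated by $2$ and $3$ generates $\ZZ$ as a group and $\RR_{\geq 0}$ as a cone, yet omits $1$; higher-dimensional analogues omit infinitely many points. Your proposed remedy --- iterating Theorem~\ref{submonoid_and_ideal}~\ref{itm:submonoid} with a fixed interior generator $\lambda_0$ --- only reproves the automatic inclusion $X+\ZZ_{\geq 0}\lambda_0\subseteq X$; it does not show that an arbitrary $\mu\in\Lambda\cap\mathcal{C}$ decomposes as a sum of weights already known to lie in $X$. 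To close this one must exhibit an explicit finite set of weights generating all of $\Lambda\cap\mathcal{C}$ (resp.\ the third-case set) as a monoid and verify $V_\lambda^{\lie{l}}\neq 0$ for each generator, and separately identify the actual inequalities cutting out $\mathcal{C}$ and the parity character cutting out $\Lambda$ to get the upper bound $X\subseteq\Lambda\cap\mathcal{C}$ --- all of which your write-up defers to an unspecified case-by-case analysis, together with the index-$2$ bound that you yourself flag as the crux. Note also that in the third case $\mathcal{C}$ cannot be recovered as $\RR_{\geq 0}X$ (that closure is essentially all of $\lie{h}^+$ there, since $Q\cap\lie{h}^+\setminus\mathcal{C}\subseteq X$); it is the locus where the parity obstruction is active, so the cone produced by your first step is not the cone appearing in the statement. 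As written, the proposal establishes only that $X$ is a finitely generated submonoid of $Q\cap\lie{h}^+$, which is strictly weaker than the theorem.
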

We refer to~\cite{Smi20} for an exhaustive table listing the sets $\mathcal{C}$ and $\Lambda$ for all the real simple Lie algebras. Here we will just give a brief overview:
\begin{itemize}
\item The sublattice~$\Lambda$ intervenes only when $\lie{g}$ is isomorphic to some $\lie{so}(p,q)$ with $p+q$ odd. In all other cases, we simply have $X = Q \cap \mathcal{C}$.
\item For split groups, quasi-split groups, all non-compact exceptional groups and some of the classical groups, $\mathcal{C}$ is actually the whole dominant Weyl chamber.
\item However in the remaining cases, $\mathcal{C}$ does not always have nonempty interior. In fact, often $\mathcal{C}$~is the intersection of the dominant Weyl chamber with a vector subspace of~$\lie{h}$. When $\lie{g}$~is compact (and only then), we have $\mathcal{C} = \{0\}$.
\end{itemize}

For Problem~\ref{nontrivial_w0_action}, we only have experimental results so far. Explicit computation of $\restr{w_0}{V^\lie{l}}$ for all sufficiently low-dimensional representations~$V$ of all sufficiently low-rank simple Lie algebras, combined with Theorem~\ref{submonoid_and_ideal}.\ref{itm:ideal}, suggests the following generalization of Theorem~\ref{LFlSm}:
\begin{conjecture}
Let $\lie{g}$ be any simple real Lie algebra of rank~$r$, and $\lambda \in Q \cap \lie{h}^+$ a dominant weight of~$\lie{g}$. Let us decompose $\lambda$ into its coordinates on the basis formed by the fundamental weights: $\lambda = \sum_{i=1}^r c_i \varpi_i$. Suppose that $V_\lambda^\lie{l} \neq 0$: then $\restr{w_0}{V_\lambda^\lie{l}} = \pm \Id$ can happen only when at most $K$ of the coordinates $c_i$ can be nonzero, where $K = 3$.
\end{conjecture}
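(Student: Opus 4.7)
The plan is to exploit the ideal structure established in Theorem~\ref{submonoid_and_ideal}(iii). Write $\mathcal{M}$ for the monoid of dominant weights $\lambda \in Q \cap \lie{h}^+$ with $V_\lambda^\lie{l} \neq 0$, described explicitly by Theorem~\ref{cone_and_sublattice}, and $\mathcal{B} \subset \mathcal{M}$ for the ideal of those $\lambda$ on which $\restr{w_0}{V_\lambda^\lie{l}} \neq \pm \Id$. To prove the conjecture it suffices to exhibit, for each simple real Lie algebra $\lie{g}$, a finite ``seed'' family $\mathcal{F} \subset \mathcal{B}$ such that every $\lambda \in \mathcal{M}$ with at least four nonzero coordinates $c_i$ decomposes as $\lambda = \lambda^{(0)} + \mu$ with $\lambda^{(0)} \in \mathcal{F}$ and $\mu \in \mathcal{M}$; the ideal property then immediately yields $\lambda \in \mathcal{B}$.

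Concretely, I would proceed case by case through the Cartan classification. For each $\lie{g}$ and each $4$-element subset $S \subset \{1, \ldots, r\}$, the ``face'' $\mathcal{M}_S := \setsuch{\lambda \in \mathcal{M}}{\operatorname{supp}(\lambda) \subseteq S}$ is a finitely generated submonoid and so admits a finite Hilbert basis; one then selects from the non-negative combinations of its generators a minimal collection of elements of full support~$S$ that collectively ``cover'' every weight of $\mathcal{M}$ whose support contains~$S$. Membership of each candidate $\lambda^{(0)}$ in $\mathcal{B}$ is certified by explicit computation of $\restr{w_0}{V_{\lambda^{(0)}}^\lie{l}}$, adapting the Cartan-product techniques of~\cite{LFlSm} to the non-split setting: typically one exhibits two $\lie{l}$-invariant vectors, produced as Cartan products of constituents whose $w_0$-signs are known from Theorem~\ref{LFlSm} or from the author's experimental census, that are moved in opposite directions by $w_0$.

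The main obstacle is the decomposition step. Given an arbitrary $\lambda \in \mathcal{M}$ with $|\operatorname{supp}(\lambda)| \geq 4$, one must produce $\lambda^{(0)} \in \mathcal{F}$ supported inside $\operatorname{supp}(\lambda)$ with $\lambda - \lambda^{(0)} \in \mathcal{M}$: a genuine geometric requirement forcing the residue to remain inside the cone~$\mathcal{C}$ of Theorem~\ref{cone_and_sublattice} and, in the $\lie{so}(p,q)$ families with $p+q$ odd, inside the index-$2$ sublattice~$\Lambda$. In the cases where $\mathcal{C}$ is the full dominant Weyl chamber, this should reduce to a routine Hilbert-basis exercise; the delicate cases are those in which $\mathcal{C}$ is a proper face of the Weyl chamber, where one may be forced to enlarge~$\mathcal{F}$ with further seeds of full support~$S$ chosen adaptively to the local combinatorics of~$\mathcal{C}$ near each quadruple of indices. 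The sharpness of the value $K=3$, finally, is to be justified by the same experimental census, which should identify the explicit sporadic weights of support size three on which $w_0$ acts as $\pm \Id$ and confirm that no analogous phenomenon survives at support size four.
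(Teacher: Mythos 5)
The statement you are trying to prove is not proved in the paper at all: it is stated as a \emph{conjecture}, supported only by an experimental census (explicit computation of $\restr{w_0}{V_\lambda^\lie{l}}$ for low-dimensional $V$ and low-rank $\lie{g}$) combined with the ideal property of Theorem~\ref{submonoid_and_ideal}.\ref{itm:ideal}. Your overall skeleton --- find a finite seed family $\mathcal{F}\subset\mathcal{B}$ and propagate non-scalarity via the ideal property --- is exactly the mechanism the author uses to extrapolate from finitely many computations, so you have correctly identified the natural line of attack. But what you have written is a strategy, not a proof, and you say so yourself: the decomposition step is flagged as ``the main obstacle'' and left unresolved, and membership of each seed in $\mathcal{B}$ is to be ``certified by explicit computation'' that has not been carried out. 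The paper itself records that such computations can be intractable (e.g.\ $\lie{g}=\lie{sp}(9,3)$, $\lambda=4\varpi_{11}$, where it is not even known whether $\restr{w_0}{V_\lambda^\lie{l}}$ is scalar), so the certification step cannot be waved through.

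Two further gaps deserve naming. First, ``case by case through the Cartan classification'' is not a finite task: the classical real forms come in infinite families ($\lie{so}(p,q)$, $\lie{su}(p,q)$, $\lie{sp}(p,q)$, \dots), so for each family you would need a uniform construction of seeds and a uniform decomposition argument valid for all $p,q$, not a Hilbert-basis computation done separately for each algebra. Second, ``adapting the Cartan-product techniques of~\cite{LFlSm} to the non-split setting'' hides the genuinely hard part: in \cite{LFlSm} the Cartan product only propagates non-scalarity from base cases, and establishing those base cases (exhibiting two zero-weight vectors moved by $w_0$ in opposite directions) is where the real work lies. In the non-split case $V^\lie{l}$ is a proper subspace of the zero-restricted-weight space, cut out by the possibly large group $L$, and no analogue of the base-case analysis currently exists --- which is precisely why the statement remains open.
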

(Compare this with the split case, where this statement holds for $K = 1$.) Moreover, experimental results allow us to conjecture an explicit description of the set of weights satisfying this property, for almost all simple Lie groups~$\lie{g}$. Here are a few examples.
\begin{conjecture}
Let $\lie{g}$ and $\lambda = \sum_{i=1}^r c_i \varpi_i$ be as previously, with the fundamental weights $\varpi_1, \ldots, \varpi_r$ labelled in the Bourbaki ordering~\cite{BouGAL456}. Then:
\begin{hypothenum}
\item If $\lie{g} = \lie{so}(2,q)$ with $q = 7$ or $q \geq 9$, we have:
\begin{itemize}
\item \cite{Smi20} tells us that $V_\lambda^\lie{l} \neq 0$ if and only if $\lambda \in Q$ and $c_i = 0$ for all $i > 4$;
\item assuming this is the case, $\restr{w_0}{V_\lambda^\lie{l}} = \pm \Id$ if and only if $\lambda = x \varpi_i + y \varpi_4$ with $i \in \{1, 2, 3\}$ and $x, y$ arbitrary nonnegative integers.
\end{itemize}
\item If $\lie{g} = EIV$ (the real form of~$E_6$ with maximal compact subalgebra~$F_4$, also known as $E_6^{-26}$), we have:
\begin{itemize}
\item \cite{Smi20} tells us that $V_\lambda^\lie{l} \neq 0$ always holds for $\lambda \in Q$;
\item assuming this is the case, $\restr{w_0}{V_\lambda^\lie{l}} = \pm \Id$ if and only if $\lambda = x \varpi_i + y \varpi_2$ with $i \in \{1, 3, 5, 6\}$ and $x, y$ arbitrary nonnegative integers.
\end{itemize}
\item If $\lie{g}$ is the Lie algebra variously called~$\lie{sp}(12,4)$ (by some authors, such as Bourbaki~\cite{BouGAL456}) or~$\lie{sp}(6,2)$ (by some authors, such as Knapp~\cite{Kna96}), we have:
\begin{itemize}
\item \cite{Smi20} tells us that $V_\lambda^\lie{l} \neq 0$ always holds for $\lambda \in Q$;
\item assuming this is the case, $\restr{w_0}{V_\lambda^\lie{l}} = \pm \Id$ if and only if $\lambda = x \varpi_i + y \varpi_8$ with $1 \leq i \leq 7$ and $x, y$ some nonnegative integers, with the additional condition $x \leq 2$ if $i = 3, 4$ or~$5$.
\end{itemize}
\end{hypothenum}
\end{conjecture}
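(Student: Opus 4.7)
\emph{Plan.} The strategy is to combine the ideal structure from Theorem~\ref{submonoid_and_ideal}\ref{itm:ideal} with case-by-case explicit computations, extending the method used in the split case \cite{LFlSm}. Write $M := \{\lambda \in Q \cap \lie{h}^+ : V_\lambda^\lie{l} \neq 0\}$, whose precise description in each of the three cases is recalled in the conjecture itself, and let $G := \{\lambda \in M : \restr{w_0}{V_\lambda^\lie{l}} = \pm\Id\}$. Theorem~\ref{submonoid_and_ideal}\ref{itm:ideal} asserts that $M \setminus G$ is an ideal of the monoid~$M$; let $G^\star$ denote the set described by the conjecture. The goal is to prove $G = G^\star$.

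\emph{Inclusion $G \subseteq G^\star$ (easy direction).} For each case, I would exhibit a short finite list of weights $\mu_1, \ldots, \mu_N \in M \setminus G^\star$ with the property that every $\lambda \in M \setminus G^\star$ decomposes as $\lambda = \mu_k + \nu$ for some $k$ and some $\nu \in M$. The ideal property then reduces the question to verifying $\restr{w_0}{V_{\mu_k}^\lie{l}} \neq \pm\Id$ for each $k$. These minimal bad weights can be read off the combinatorial shape of $M \setminus G^\star$: in case~(i), pairs $\varpi_i + \varpi_j$ with $i \neq j$ in $\{1,2,3\}$ (possibly shifted by a small element of~$Q$ to enter~$M$); in case~(iii), analogous pairs with $i,j \in \{1,\ldots,7\}$, together with the weights $3\varpi_i$ for $i \in \{3,4,5\}$ that enforce the bound $x \leq 2$; case~(ii) is similar. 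For each such $\mu_k$, $\restr{w_0}{V_{\mu_k}^\lie{l}}$ is computed directly in a concrete realization of the relatively low-dimensional representation~$V_{\mu_k}$, using an explicit lift of $w_0$ to $N_G(\lie{a})$.

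\emph{Inclusion $G^\star \subseteq G$ (deep direction).} The graded commutative $\CC$-algebra $A := \bigoplus_{\lambda} V_\lambda^\lie{l}$, with multiplication given by the Cartan product, is a finitely generated subalgebra of $\CC[G/N]$, and $w_0$ acts on it as an involution of graded algebras. If one can exhibit minimal homogeneous generators $f_1, \ldots, f_n$ of $A$, of respective weights $\lambda_1, \ldots, \lambda_n$, each with $\dim V_{\lambda_k}^\lie{l} = 1$, and compute the signs $\sigma_k \in \{\pm 1\}$ with $w_0 \cdot f_k = \sigma_k f_k$, then $w_0$ acts by the scalar $\prod_k \sigma_k^{a_k}$ on each monomial $f_1^{a_1} \cdots f_n^{a_n}$. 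Membership of a given $\lambda$ in~$G$ then becomes the finite combinatorial check that all ways of writing $\lambda = \sum a_k \lambda_k$ with $a_k \in \ZZ_{\geq 0}$ produce the same sign $\prod \sigma_k^{a_k}$; for $\lambda = x\varpi_i + y\varpi_j$ with the indices of the conjecture, this equality should hold.

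\emph{Main obstacle.} The crux is obtaining enough structural information on the algebra~$A$ --- a presentation, or at least a generating set with its $w_0$-action --- in each of the three cases. This is essentially an invariant-theory problem for the pair~$(G, L)$ and does not seem to be available in the literature in the form required. I would expect case~(ii) on $EIV$ to be the most accessible, via the Jordan-algebraic description of the exceptional Hermitian symmetric spaces; the classical cases~(i) and~(iii) will likely need Howe-duality-style analyses specific to the orthogonal and quaternionic symplectic pairs involved. Once~$A$ and the signs~$\sigma_k$ are in hand, both directions reduce to finite combinatorics and explicit linear algebra, tractable by hand or by computer algebra.
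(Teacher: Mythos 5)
The statement you are trying to prove is labelled a \emph{conjecture} in the paper, and the paper offers no proof of it: the author explicitly says that for Problem~\ref{nontrivial_w0_action} ``we only have experimental results so far,'' and that the description above is \emph{conjectured} from explicit computations of $\restr{w_0}{V_\lambda^\lie{l}}$ for low-dimensional representations combined with the ideal property of Theorem~\ref{submonoid_and_ideal}.\ref{itm:ideal}. Your proposal is therefore not being measured against an existing argument, and, as written, it is a research programme rather than a proof. Your ``easy direction'' ($G \subseteq G^\star$) is essentially the paper's own evidence-gathering method (find minimal bad weights, check them by brute force, propagate by the ideal property), and it is sound in principle --- but note that the paper itself warns that even a single such verification can be intractable (e.g.\ $\lie{sp}(9,3)$, $\lambda = 4\varpi_{11}$), so you must actually confirm that the finitely many minimal elements of $M \setminus G^\star$ you list are all computationally accessible, and you have not identified them precisely nor carried out any of the computations.

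The genuine gap is in the ``deep direction'' $G^\star \subseteq G$. Your argument is conditional on three unestablished inputs: (a) an explicit finite homogeneous generating set $f_1,\dots,f_n$ of the invariant algebra $A = \bigoplus_\lambda V_\lambda^\lie{l}$; (b) the hypothesis that each generator weight $\lambda_k$ satisfies $\dim V_{\lambda_k}^\lie{l} = 1$; and (c) the signs $\sigma_k$. Input (b) is particularly fragile: the target weights $\lambda = x\varpi_i + y\varpi_j$ have invariant spaces $V_\lambda^\lie{l}$ of unbounded dimension as $x,y$ grow, and there is no reason the generators of $A$ should all sit in one-dimensional graded pieces; if some $\dim V_{\lambda_k}^\lie{l} > 1$, then $w_0$ restricted there is an involution that need not be scalar, the bookkeeping by signs collapses, and one must instead control how $w_0$ acts on products of non-eigenvectors --- which is exactly the hard content of the conjecture. (Even in the split case, the proof in \cite{LFlSm} does not reduce to such a monomial-sign count.) You acknowledge this obstacle yourself in your final paragraph, but acknowledging it does not close it: without (a)--(c) the inclusion $G^\star \subseteq G$ is not proved for a single one of the three families, so the statement remains, as in the paper, a conjecture.
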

However, there are a few pairs $(\lie{g}, V)$ where the dimension of~$V$ is so large that brute-force computations become intractable, but where analogous representations in smaller rank are not sufficiently well-behaved to establish a general pattern. For instance, take $\lie{g} = \lie{sp}(9,3)$ (or~$\lie{sp}(18,6)$ with the other convention) and $\lambda = 4\varpi_{11}$. We know, from \cite{Smi20}, that we then have $V_\lambda^{\lie{l}} \neq 0$; but we do not currently know (and cannot easily guess) whether $\restr{w_0}{V_\lambda^\lie{l}}$ is scalar or not.

\bibliographystyle{alpha}
\bibliography{/home/ilia/Documents/Travaux_mathematiques/mybibliography.bib}

\newcommand{\inprep}{in preparation}  \newcommand{\arx}[1]{arXiv:#1}
  \newcommand{\forthcm}{to appear}  \newcommand{\subm}{submitted}
  \newcommand{\noop}[1]{}
\begin{thebibliography}{DGK16}

\bibitem[Abe01]{AbSur}
H.~Abels.
\newblock Properly discontinuous groups of affine transformations, a survey.
\newblock {\em Geom. Dedicata}, 87:309--333, 2001.

\bibitem[AMS02]{AMS02}
H.~Abels, G.~A. Margulis, and G.~A. Soifer.
\newblock On the {Z}ariski closure of the linear part of a properly
  discontinuous group of affine transformations.
\newblock {\em J. Differential Geom.}, 60:315--344, 2002.

\bibitem[AMS11]{AMS11}
H.~Abels, G.~A. Margulis, and G.~A. Soifer.
\newblock The linear part of an affine group acting properly discontinuously
  and leaving a quadratic form invariant.
\newblock {\em Geom. Dedicata}, 153:1--46, 2011.

\bibitem[Aus64]{Aus64}
L.~Auslander.
\newblock The structure of complete locally affine manifolds.
\newblock {\em Topology}, 3:131--139, 1964.

\bibitem[Bou68]{BouGAL456}
N.~Bourbaki.
\newblock {\em {\'E}l{\'e}ments de Math{\'e}matique, Groupes et Alg{\`e}bres de
  {L}ie : chapitres 4, 5 et 6}.
\newblock Hermann, 1968.

\bibitem[DGK16]{DGKPre}
J.~Danciger, F.~Gu{\'e}ritaud, and F.~Kassel.
\newblock Margulis spacetimes via the arc complex.
\newblock {\em Invent. Math.}, 204(1):133--193, 2016.

\bibitem[FG83]{FG83}
D.~Fried and W.~M. Goldman.
\newblock Three-dimensional affine crystallographic groups.
\newblock {\em Adv. in Math.}, 47:1--49, 1983.

\bibitem[Hal15]{Hall15}
B.~C. Hall.
\newblock {\em Lie Groups, {L}ie Algebras and Representations: An Elementary
  Introduction}.
\newblock Springer International Publishing, second edition, 2015.

\bibitem[Kna96]{Kna96}
A.~W. Knapp.
\newblock {\em Lie Groups Beyond an Introduction}.
\newblock Birkha{\"u}ser, 1996.

\bibitem[LFS18]{LFlSm}
B.~Le~Floch and I.~Smilga.
\newblock Action of {W}eyl group on zero-weight space.
\newblock {\em C. R. Math. Acad. Sci. Paris}, 356(8):852--858, 2018.

\bibitem[Mar83]{Mar83}
G.~A. Margulis.
\newblock Free properly discontinuous groups of affine transformations.
\newblock {\em Dokl. Akad. Nauk SSSR}, 272:785--788, 1983.

\bibitem[Mil77]{Mil77}
J.~Milnor.
\newblock On fundamental groups of complete affinely flat manifolds.
\newblock {\em Adv. in Math.}, 25:178--187, 1977.

\bibitem[PV94]{VinPopBook}
V.~L. Popov and E.~B. Vinberg.
\newblock {\em Invariant Theory}.
\newblock Springer, 1994.

\bibitem[Smi20]{Smi18}
I.~Smilga.
\newblock Construction of {M}ilnorian representations.
\newblock {\em Geom. Dedicata}, \noop{2021}\forthcm \noop{20}.
\newblock \arx{1802.07193}.

\bibitem[Smi21a]{Smi16b}
I.~Smilga.
\newblock Proper affine actions: a sufficient criterion.
\newblock \arx{1612.08942}, \noop{2022}\subm \noop{21}.

\bibitem[Smi21b]{Smi20}
I.~Smilga.
\newblock Representations having vectors fixed by a {L}evi subgroup.
\newblock \arx{2002.10928}, \noop{2022}\subm \noop{21}.

\end{thebibliography}
\end{document}